\documentclass[microtype]{gtpart}%options: 11pt, A4paper,                       
\title{Surjective morphisms from affine space to its Zariski open subsets}

\author{Viktor Balch Barth}
%\givenname{Viktor}
%\surname{Balch Barth}
\email{viktorbb@math.uio.no}
\address{Department of Mathematics, UiO, Norway.}

\usepackage{hyperref}
\usepackage{setspace}
\usepackage{graphicx}
\usepackage{amssymb}
\usepackage{mathrsfs}
\usepackage{amsthm}
\usepackage{amsmath}
\usepackage{color}
\usepackage[Lenny]{fncychap}
\usepackage[font=small,labelfont=bf]{caption}
\usepackage{fancyhdr}
\usepackage{times}
%\usepackage[intoc]{nomencl}
%\renewcommand{\nomname}{List of Abbreviations}
%\makenomenclature
%\usepackage{natbib}
\usepackage[capitalize]{cleveref}
\usepackage{float}
\usepackage{extarrows}
%\floatstyle{boxed} 
\restylefloat{figure}
\usepackage{algorithm}
\usepackage{algpseudocode}

% ShareLaTeX does not support glossaries now. Sorry...
%\usepackage[number=none]{glossary}
%\makeglossary
%\newglossarytype[abr]{abbr}{abt}{abl}
%\newglossarytype[alg]{acronyms}{acr}{acn}
%\newcommand{\abbrname}{Abbreviations} 
%\newcommand{\shortabbrname}{Abbreviations}
%%\makeabbr

\usepackage{mathtools}
\usepackage{faktor}
\usepackage{amsfonts}
\usepackage{color}
\usepackage{graphicx}
\usepackage{xspace}
\usepackage{hyperref}
\setlength{\columnsep}{1cm}
\usepackage{blindtext}
\usepackage{url}
\usepackage{listings}
\usepackage{graphicx}
\usepackage{float}
\usepackage{xcolor}
\usepackage{etoolbox}
\usepackage{caption}
\usepackage{xpatch}
\usepackage{appendix}
\usepackage{parskip}
\usepackage{tikz}
\usepackage{tikz-cd}
\usepackage{enumerate}
\usepackage{pgfplots}
\pgfplotsset{compat=newest}
\usepackage{wrapfig}
\usepackage{stmaryrd}
\usepackage{comment}
\bibliographystyle{acm} %alpha
\usepackage{tikz}
\usepackage{pgfplots}
\usepackage{tikz-3dplot}
\tdplotsetmaincoords{60}{115}
\pgfplotsset{compat=newest}
\usetikzlibrary{calc}
\usepackage{subcaption}
\usepackage{comment}

\usepackage{datetime}

\newdateformat{monthyeardate}{%
  \monthname[\THEMONTH], \THEYEAR}

\pgfplotsset{compat=1.15}

\xspaceaddexceptions{]\}}
\newcommand{\fix}[1]{\ifmmode{#1}\else{$#1$}\xspace\fi}
\newcommand{\A}{\mathbb{A}}
\newcommand{\an}{\mathbb{A}^{n}}
\newcommand{\ak}{\mathbb{A}^{k}}

\newcommand{\inv}{^{-1}}

\newcommand{\at}{\mathbb{A}^2}

\renewcommand{\phi}{\varphi}
\renewcommand{\epsilon}{\varepsilon}

\renewcommand{\and}{\quad \text{ and } \quad}
 % transfert

%\newcommand{\fst}{\overset{\simeq} tiny\mathrm{st}}{\fd}}

\newcommand*{\doublerightarrow}[2]{\mathrel{
  \settowidth{\@tempdima}{$\scriptstyle#1$}
  \settowidth{\@tempdimb}{$\scriptstyle#2$}
  \ifdim\@tempdimb>\@tempdima \@tempdima=\@tempdimb\fi
  \mathop{\vcenter{
    \offinterlineskip\ialign{\hbox to\dimexpr\@tempdima+1em{##}\cr
    \rightarrowfill\cr\noalign{\kern.5ex}
    \rightarrowfill\cr}}}\limits^{\!#1}_{\!#2}}}

%{{\mathscr{#1}}}

%\newcommand{\rschemes}{\catname{Sch}_R}
%\newcommand{\schemesr}{\catname{Sch}_R}
%\newcommand{\raff}{\catname{Aff}_R}
%\newcommand{\ralg}{\catname{Alg}_R}

%\newcommand{\defeq}{\vcentcolon =}

\newcommand\restr[2]{{% we make the whole thing an ordinary symbol
  \left.\kern-\nulldelimiterspace % automatically resize the bar with \right
  #1 % the function
  \vphantom{\big|} % pretend it's a little taller at normal size
  \right|_{#2} % this is the delimiter
  }}

\theoremstyle{plain}
\newtheorem{thm}{Theorem}[section]
\newtheorem{cor}[thm]{Corollary}

\newtheorem{lem}[thm]{Lemma}
\newtheorem{prop}[thm]{Proposition}
\theoremstyle{definition}

\newtheorem{exam}[thm]{Example}

\newtheorem{defn}[thm]{Definition}

\begin{document}

\begin{abstract}  
We prove constructively the existence of surjective morphisms from  affine space onto certain open subvarieties of affine space of the same dimension. 
For any algebraic set $Z\subset \mathbb{A}^{n-2}\subset \mathbb{A}^{n}$, we construct an endomorphism of $\mathbb{A}^{n}$ with $\mathbb{A}^{n} \setminus Z$ as its image. 
By Noether's normalization lemma, these results extend to give surjective maps from any $n$-dimensional affine variety $X$ to $\mathbb{A}^{n} \setminus Z$.
\end{abstract}

\maketitle

\section{Introduction}
In the context of Oka theory, Forstneri\v{c} \cite{Forstneric2017surjective} showed that every connected Oka manifold $Y$ admits a surjective holomorphic map from an affine space $\mathbb{A}^N$. 
Moreover, this affine space can be taken to be $\mathbb{A}^{\dim Y}$. 
Motivated by this, Arzhantsev and Kusakabe \cite{arzhantsev2022images,kusakabe2022surjective} recently proved analogous results in the algebraic setting. 
For this paper, we consider algebraic varieties over an algebraically closed field $\mathbb{K}$ of characteristic zero.
\begin{defn}
    An algebraic variety $Y$ is called an \emph{$A$-image} if for some positive integer $N$ there is a surjective morphism $f: \mathbb{A}^N \to Y$.
\end{defn}
Arzhantsev proved that every very flexible variety is an $A$-image \cite{arzhantsev2022images}.
As shown in \cite{arzhantsev2013flexible}, every very flexible variety is smooth and subelliptic.
Kusakabe extended Arzhantsev's result by showing that any smooth subelliptic variety $Y$ admits a surjective morphism $f: \mathbb{A}^{\dim Y+1} \to Y$ \cite{kusakabe2022surjective}. 
For a definition of algebraic subellipticity, see \cite[Definition 5.6.13 (e)]{Forstneric2017book}, and note that it was recently shown to be equivalent to algebraic ellipticity for smooth varieties \cite{kaliman2023gromov}.
Whether any $A$-image $Y$ also admits surjective morphisms $f: \mathbb{A}^{\dim Y} \to Y$ is an open question 
\cite{arzhantsev2022images,Larusson2017Approx,Forstneric2017surjective,FORSTNERIC2023}. 
When $Y$ is a smooth \emph{proper} subelliptic variety, Forstneri\v{c} proved over the complex numbers that such a surjective map from $\mathbb{A}^{\dim Y}$ does indeed exist \cite[Theorem 1.6]{Forstneric2017surjective}. It is worth pointing out that Forstneri\v{c} and Kusakabe's results give maps which are surjective even when restricted to the smooth locus $\an \setminus \left(\mathrm{Sing}(f)\right)$. 

In this paper, we ask whether there are surjective morphisms $f: \mathbb{A}^{\dim Y} \to Y$ for open subvarieties $Y \subset \an$. For certain families of cases we give a positive answer. First of all, note that any $A$-image $Y$ must have complement $\an \setminus Y$ of codimension at least two. If not, then $Y$ admits a nonconstant invertible regular function $g$ which pulls back to a nonconstant invertible regular function $f^*(g)$ on $\mathbb{A}^N$, giving a contradiction. 
However, as long as $\an\setminus Y$ has codimension at least two, $Y$ is an $A$-image  \cite{arzhantsev2022images,kusakabe2022surjective}. 

In the two-dimensional case, Jelonek gave the example of a surjective map $f: \at \to \at \setminus \{(0,-1)\}$ defined by $f(z_1,z_2) = (z_1(z_1z_2+1)-z_2,z_1z_2)$  \cite{Jelonek1999Anumberof}. He also proved that no such map of lower degree exists. Modifying and generalizing this example yields the constructions used in \cref{thm: surjective onto complements of affine variety} and \cref{thm: surjective onto complement of points}. Note that Jelonek's map answered the question of the existence of a surjective morphism $f: \at \to \at \setminus \{(0,0)\}$ even before it was asked in \cite{Larusson2017Approx} (and then answered again in \cite{arzhantsev2022images}). 

In what follows we state our main results and their corollaries, postponing the proofs until \cref{sec: proofs}. 

\begin{thm}\label{thm: surjective onto complements of affine variety}
    Let $Z\subset \an$ be an algebraic set of the form $Z = F %\{p_j\}_{j}  
    \times W \subset \mathbb{A}^{2} \times \mathbb{A}^{n-2}$, where $F\subset \mathbb{A}^{2}$ is a finite set of $l$ points, and $W\subset \mathbb{A}^{n-2}$ is the zero set of $m$ polynomials $q_i$ of degree at most $d$.
    Then there exists a surjective map
    $f: \mathbb{A}^{n} \to \mathbb{A}^{n} \setminus Z $ of degree $\deg(f) \leq \max(1, l-1)\cdot 
    \max(l+2, m+d+1)$.   
\end{thm}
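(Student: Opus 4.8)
The plan is to reduce the statement to a family of Jelonek-type maps on the $\mathbb{A}^{2}$-factor, parametrised by the $\mathbb{A}^{n-2}$-factor, and to overlay two independent mechanisms: one that deletes the $l$ points of $F$, and one that detects membership in $W$. Concretely, I would first look for $f$ of the special form $f(z_1,z_2,y)=\bp{f_1(z_1,z_2,y),\,f_2(z_1,z_2,y),\,y}$, acting as the identity on the last $n-2$ coordinates. For such $f$ the image is the union over $y$ of $(\text{image of the slice map }\mathbb{A}^{2}\to\mathbb{A}^{2})\times\{y\}$, so surjectivity onto $\an\setminus Z$ is \emph{equivalent} to the slice over $y$ being onto all of $\mathbb{A}^{2}$ when $y\notin W$ and onto $\mathbb{A}^{2}\setminus F$ when $y\in W$. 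Since the last coordinates contribute degree $1$, we have $\deg f=\max(\deg f_1,\deg f_2)$, and the whole problem becomes the construction of one such family of plane maps.

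\textbf{Normalising $F$.} Next I would move $F$ into convenient position by an automorphism $\phi$ of $\mathbb{A}^{2}$: after a generic linear change the $l$ points have distinct first coordinates, and a shear $(z_1,z_2)\mapsto(z_1,z_2-p(z_1))$ with $\deg p\le l-1$ places them on a line. Both $\phi$ and $\phi\inv$ have degree $\le\max(1,l-1)$. Applying $\phi\times\mathrm{id}$ to $Z$ and post-composing the final map with $(\phi\times\mathrm{id})\inv$ changes the degree only by this factor, which accounts for the first factor $\max(1,l-1)$ in the bound. It therefore suffices to treat the case where $F$ is a standard configuration and to produce a slice family of degree $\le\max(l+2,\,m+d+1)$.

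\textbf{The two mechanisms.} Taking $f_2=z_1z_2$ as in Jelonek's map, solving $f=(u,v,y)$ reduces, away from the locus $z_1z_2=0$, to a single polynomial equation in $z_1$; over the algebraically closed field $\K$ this is solvable unless its leading coefficient degenerates on a ``bad'' fibre. Arranging $l$ such bad fibres, each omitting exactly one point of the standard $F$, deletes $F$ and costs degree $l+2$ (for $l=1$ this is precisely Jelonek's degree-$3$ map). To detect $W$ I would use the device that, for a fresh variable $z$, the polynomial $1-\sum_{i=1}^{m}q_i(y)z^{i}$ has a root $z\in\K$ \emph{if and only if} $y\notin W$: if all $q_i(y)=0$ it is the constant $1$, and otherwise it is non-constant and hence has a root. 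This converts the codimension-$\ge 2$ condition $q_1(y)=\dots=q_m(y)=0$ into a one-variable solvability condition of degree $\le m+d$, which I would graft onto the bad fibres so that the omission of $F$ is switched on exactly when $y\in W$. Overlaying the two mechanisms inside the same pair $(f_1,f_2)$, rather than composing them, is what keeps the degree at the maximum $\max(l+2,m+d+1)$ instead of a product.

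\textbf{Main obstacle.} The delicate point, which I expect to absorb most of the work, is precisely this grafting. Because $W$ is an intersection of $m$ hypersurfaces rather than a single one, no naive ``bad fibre $=$ hypersurface'' recipe can cut it out; the detection polynomial above removes the logical obstruction, but one must then check, simultaneously across the degenerate branches $z_1=0$, $z_2=0$ and the generic fibres, that (i) for $y\in W$ every point of $F$ is genuinely omitted on \emph{all} branches, (ii) for $y\notin W$ every target is hit, including those whose first two coordinates lie in $F$, and (iii) no spurious points are dropped. Verifying these three properties together while respecting the degree ceiling $\max(l+2,m+d+1)$ is the heart of the argument; the surrounding degree estimates and the final composition with $(\phi\times\mathrm{id})\inv$ are then routine.
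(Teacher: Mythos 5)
Your architecture matches the paper's: the proof of \cref{thm: surjective onto complements of affine variety} does use a map of exactly your special form $\big(f_1(z_1,z_2,y),\,f_2(z_1,z_2),\,y\big)$ (\cref{lem: surjective onto OOW}), the factor $\max(1,l-1)$ does come from an interpolation shear $h=(z_1,\,z_2+L(z_1),\,z_3,\ldots,z_n)$ composed at the end, and your root-detection device --- $y\notin W$ if and only if a one-variable polynomial with coefficients $q_i(y)$ is nonconstant, hence has a root over $\K$ --- is precisely the mechanism the paper exploits. But your proposal stops exactly at the step you yourself call the heart of the argument: the ``grafting'' is never carried out, no explicit pair $(f_1,f_2)$ is written down, and your properties (i)--(iii) are asserted as obligations rather than verified. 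As it stands this is a correct plan, not a proof.

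The missing idea is the triangular form that makes the grafting trivial. Take $f_2=r(z_1)z_2+1$ with $r(z_1)=\prod_j(z_1-\beta_j)$ --- \emph{not} $f_2=z_1z_2$ --- and $f_1=z_1+z_2 f_2+\sum_{i=1}^m z_2^{i+1}q_i(y)$. Since $z_1$ occurs linearly in $f_1$ once the value $w_2$ of $f_2$ is substituted, one eliminates $z_1=w_1-w_2z_2-\sum_i\alpha_iz_2^{i+1}$ (with $\alpha_i=q_i(y)$), and surjectivity of every slice collapses to the solvability of the single equation $0=1-w_2+z_2\,r\big(w_1-w_2z_2-\sum_i\alpha_iz_2^{i+1}\big)$, the paper's \cref{eq: general secondeq}. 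If $w_2\neq 0$ or some $\alpha_i\neq 0$, the right-hand side is nonconstant in $z_2$ (its leading coefficient is a power of $-w_2$ or of the top nonzero $-\alpha_i$, times the monic leading coefficient of $r$), hence has a root; if $w_2=0$ and all $\alpha_i=0$ it reads $0=1+z_2\,r(w_1)$, solvable iff $r(w_1)\neq 0$. So there is a \emph{single} bad line $w_2=0$ carrying all $l$ omitted points $(\beta_j,0)$, switched on exactly when $y\in W$ --- in contrast to your picture of ``$l$ bad fibres, each omitting exactly one point,'' which with $f_2=z_1z_2$ is not obviously realizable within degree $l+2$, since you would need $l$ distinct degenerate fibre values from a degree-$(l+2)$ pair while simultaneously controlling the branches $z_1=0$ and $z_2=0$. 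In the triangular form both mechanisms live in one equation, your checks (i)--(iii) reduce to the two solvability cases just listed, and the degree is $\max(l+2,\,(m+1)+d)$, as required.
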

Given a closed algebraic set $Z\subset \ak$ satisfying some mild conditions, there are bounds on the number $m$ of polynomials required to specify $Z$ as a set, and on the maximal degree $d$ of these polynomials. For $Z$ an irreducible closed subset of $\ak$, \cite[Proposition 3]{Heintz1983Defin} states that it is described by $k+1$ polynomials of degree bounded by $\deg Z$. 
If $Z$ is a smooth equidimensional variety, then by \cite[Theorem I]{blanco2004computing} it is described by $m= (k-\dim Z) (1+\dim Z)$ polynomials, of degree bounded by $\deg Z$. As a consequence, we have the following corollary.
\begin{cor}
If $Z\subset \A^{n-2}\subset \A^{n}$ is an irreducible closed 
algebraic set, 
then there exists a surjective map $f: \an \to \an\setminus  Z$ with $\deg(f) \leq n + \deg(Z)$. If $Z\subset \A^{n-2}\subset \A^{n}$ is a smooth equidimensional 
variety,  
then there exists a surjective map $f: \an \to \an\setminus Z$ with $\deg(f) \leq (n-2-\dim(Z))(1+\dim(Z))+ \deg(Z)+1$.\\
\end{cor}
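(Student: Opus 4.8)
The plan is to deduce both assertions directly from \cref{thm: surjective onto complements of affine variety} by exhibiting $Z$ as a product $F\times W$ in which $F$ is a single point, and then substituting the effective defining-equation bounds of Heintz \cite{Heintz1983Defin} and of \cite{blanco2004computing} recalled just above. The whole argument is a specialization of the main theorem, so no new geometric construction is needed; the only work is bookkeeping with the two maxima in the degree bound.

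First I would fix the decomposition. Viewing $Z\subset \A^{n-2}$ inside $\an = \A^2\times \A^{n-2}$ via the inclusion $\A^{n-2}\cong \{0\}\times \A^{n-2}$, I can write $Z = F\times W$ with $F$ the single point $\{0\}\subset \A^2$ and $W = Z\subset \A^{n-2}$. Thus $l = |F| = 1$, so the leading factor in the bound of \cref{thm: surjective onto complements of affine variety} collapses: $\max(1,l-1) = \max(1,0) = 1$. The conclusion of the theorem therefore reduces to $\deg(f)\leq \max(l+2,\,m+d+1) = \max(3,\,m+d+1)$, where $m$ is the number and $d$ the maximal degree of a system of polynomials in the $n-2$ coordinates of $\A^{n-2}$ cutting out $W=Z$.

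Next I would insert the two cited bounds with $k = n-2$. For the irreducible case, Heintz's Proposition 3 describes $Z$ by $m = k+1 = n-1$ polynomials of degree at most $d = \deg Z$, whence $m+d+1 = n+\deg Z$; since $Z$ is nonempty we have $\deg Z\geq 1$ and $n\geq 2$, so $\max(3,\,n+\deg Z) = n+\deg Z$ and the first bound follows. For the smooth equidimensional case, Theorem I of \cite{blanco2004computing} gives $m = (k-\dim Z)(1+\dim Z) = (n-2-\dim Z)(1+\dim Z)$ equations of degree at most $d = \deg Z$, so that $m+d+1 = (n-2-\dim Z)(1+\dim Z)+\deg Z+1$, matching the claimed expression once the outer maximum is resolved.

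The one genuine point to verify — and where I expect the only friction — is the simplification $\max(3,\,m+d+1) = m+d+1$, i.e.\ that $m+d\geq 2$. In the smooth case this holds whenever $\dim Z\leq n-3$, since then $(n-2-\dim Z)(1+\dim Z)\geq 1$ and $\deg Z\geq 1$. The borderline configuration $\dim Z = n-2$ forces $Z=\A^{n-2}$, where the product term vanishes and the bound must be read as $\max(3,2)=3$; I would either exclude this (the complement of a codimension-two linear subspace is handled directly and is of little interest) or simply note the bound there is $3$. Everything else is a verbatim substitution into \cref{thm: surjective onto complements of affine variety}.
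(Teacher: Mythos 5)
Your proof is correct and matches the paper's (implicit) argument exactly: the corollary is stated there as an immediate consequence of \cref{thm: surjective onto complements of affine variety}, obtained precisely by taking $F$ a single point (so $l=1$ and the factor $\max(1,l-1)$ is $1$), substituting $k=n-2$ into the cited bounds of \cite{Heintz1983Defin} and \cite{blanco2004computing}, and reading off $\max(3,\,m+d+1)=m+d+1$. Your flagging of the degenerate case $\dim Z = n-2$ (i.e.\ $Z=\A^{n-2}$, where the claimed bound would read $2$ while the theorem only yields $\max(3,2)=3$) is a legitimate edge case the paper glosses over, and your resolution of it is sound.
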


Srinivas and Kaliman \cite{srinivas1991embedding,kaliman1991extensions} showed that if  $n\geq \max(2\dim Z_1+1, \dim TZ_1)+1$, and $\phi: Z_1 \to Z_2$ is an isomorphism of closed varieties in $\an$, then $\phi$ extends to an $\an$-isomorphism. We obtain the following corollary.
\begin{cor}
    Let $Z_1 \subset \mathbb{A}^{n-2}\subset \an$ be a closed variety such that $n\geq \max(2\dim Z_1+1, \dim TZ_1)+1$. Then for any $Z_2\subset \an$ isomorphic to $Z_1$, there exists a surjective map $f: \an \to \an \setminus Z_2$.
\end{cor}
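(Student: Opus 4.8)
The plan is to build the desired surjection in two stages: first produce a surjection onto $\an \setminus Z_1$ using \cref{thm: surjective onto complements of affine variety}, and then transport it to $\an \setminus Z_2$ by post-composing with an automorphism of $\an$ that carries $Z_1$ onto $Z_2$.

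First I would check that $Z_1 \subset \mathbb{A}^{n-2} \subset \an$ already falls under the hypotheses of \cref{thm: surjective onto complements of affine variety}. Writing $\an = \mathbb{A}^{2} \times \mathbb{A}^{n-2}$ and taking the standard inclusion $\mathbb{A}^{n-2} \cong \{(0,0)\} \times \mathbb{A}^{n-2}$, we may present $Z_1 = F \times W$ with $F = \{(0,0)\}$ a single point (so $l = 1$) and $W = Z_1 \subset \mathbb{A}^{n-2}$. Since $Z_1$ is a closed algebraic set, it is the common zero locus of finitely many polynomials, so the hypotheses are met for some $m$ and $d$; the explicit degree bound is irrelevant for the present statement. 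The theorem then furnishes a surjective morphism $f : \an \to \an \setminus Z_1$.

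Next I would invoke the extension result of Srinivas and Kaliman cited just above. The dimension assumption $n \geq \max(2\dim Z_1 + 1, \dim TZ_1) + 1$ is precisely the one required, so the given isomorphism $\phi : Z_1 \to Z_2$ of closed subvarieties of $\an$ extends to an $\an$-automorphism $\Phi$ with $\restr{\Phi}{Z_1} = \phi$. Because $\Phi$ is a bijective morphism sending $Z_1$ onto $Z_2$, it restricts to an isomorphism $\an \setminus Z_1 \xrightarrow{\ \sim\ } \an \setminus Z_2$. Composing, the morphism $\Phi \circ f : \an \to \an \setminus Z_2$ is the composite of a surjection onto $\an \setminus Z_1$ with an isomorphism $\an \setminus Z_1 \to \an \setminus Z_2$, and is therefore surjective, as desired.

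I do not expect any genuine obstacle here: the corollary is a formal consequence of \cref{thm: surjective onto complements of affine variety} and the Srinivas--Kaliman extension theorem. The only points warranting care are that the product form $F \times W$ with $F$ a single point does capture an arbitrary closed subvariety of $\mathbb{A}^{n-2}$ (it does), and that the cited extension yields an honest automorphism of $\an$ rather than merely a closed embedding, so that the complements are matched up bijectively and surjectivity is preserved under composition.
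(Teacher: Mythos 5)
Your proposal is correct and matches the paper's intended argument exactly: the corollary is stated there as an immediate consequence of \cref{thm: surjective onto complements of affine variety} (applied with $F$ a single point, so $l=1$, and $W=Z_1$ cut out by finitely many polynomials) together with the Srinivas--Kaliman extension theorem, whose dimension hypothesis is precisely the one assumed. Composing the surjection onto $\an \setminus Z_1$ with the extending automorphism carrying $Z_1$ onto $Z_2$ is just the argument the paper leaves implicit, and your two points of care (the product presentation and that the extension is a genuine automorphism) are both handled correctly.
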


Over the real numbers, Fernando and Gamboa \cite{FernandoGamboa} showed the existence of a surjective morphism $f: \mathbb{R}^n \to \mathbb{R}^n\setminus F$ %\{l \textrm{ points}\}$ 
for any finite set $F$ of $l$ points, and $n\geq2$. For $F$ a subset of a line, their map $f$ has degree $2(l+2)$. Morphisms of lower degree, namely $\deg(f) = l+2$, were obtained by El Hilany \cite{el2022counting} in the setting where $F$ is a subset of a line, $l$ is even, in dimension $2$ over $\mathbb{K}$ an algebraically closed field of characteristic zero. 
In the following proposition, by use of a different construction, we extend El Hilany's degree bound to also hold for $l$ odd, and for any $n\geq 2$.

\begin{prop}\label{thm: surjective onto complement of points}
For $n\geq 2$, and any set of $l$ points $F %\{p_j\}_j
\subset \an$, there exists a morphism $f: \an \to \an$ such that $f(\an)= \an\setminus F$. 
%= \{p_j\}_j$. 
In general, if $l>2$, then there exists an $f$ with $\deg(f) \leq (l-1)\cdot (l+2)$. If $F$ is a subset of a line, then there exists $f$ with $\deg(f) = l+2$.  
\end{prop}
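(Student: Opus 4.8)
The plan is to generalise Jelonek's example $f(z_1,z_2)=(z_1(z_1z_2+1)-z_2,\,z_1z_2)$, whose point of non-surjectivity arises as follows: the second coordinate $B=z_1z_2$ realises $\at$ as a $\gm$-fibration over $\mathbb{A}^1$ (the fibre $B=b$ is the hyperbola $z_1z_2=b\cong\gm$ for $b\neq 0$), and on each fibre the first coordinate restricts to a Laurent polynomial in the parameter $t=z_1$. The key elementary fact is that a Laurent polynomial $\sum_{k=-p}^{q}a_kt^{k}$ with $a_{-p},a_q\neq 0$ is \emph{surjective} onto $\mathbb{A}^1$ as soon as $p,q\geq 1$ (clearing denominators, $L(t)=v$ becomes a polynomial with nonzero constant term, so all its roots lie in $\K^{\times}$), whereas a pure power $c\,t^{\pm m}$ omits exactly the value $0$. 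In Jelonek's map the fibre over $b=-1$ is precisely where the positive-power term dies and the restriction collapses to $c\,t^{-1}$, omitting the value $0$; this is the single missed point $(0,-1)$.

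To miss $l$ collinear points, first move them by an affine automorphism onto the $y$-axis at distinct nonzero heights $c_1,\dots,c_l$, and set
\[
 g(z_1,z_2)=\bigl(z_1R(z_1z_2)-z_2,\; z_1z_2\bigr),\qquad R(w)=\prod_{i=1}^{l}(w-c_i).
\]
Over a fibre $b\notin\{0,c_1,\dots,c_l\}$ the restriction is $tR(b)-b/t$, a genuine Laurent polynomial, hence surjective; over $b=c_i$ it degenerates to $-c_i/t$ and omits exactly $0$, giving the missed point $(0,c_i)$; and the reducible fibre $b=0$ is covered by the branch $z_2=0$, on which $z_1\mapsto z_1R(0)$ is surjective since $R(0)\neq 0$. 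Thus $\mathrm{im}(g)=\at\setminus\{(0,c_i)\}$. This already establishes the existence statement; the economical choice needed for the sharp bound $\deg=l+2$ is to replace the $\gm$-fibration $z_1z_2$ by a fibration with generic fibre $\mathbb{A}^1$ minus several points (so that the $l$ omissions are produced more cheaply, as in El Hilany), which is where the bulk of the degree bookkeeping lives.

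For $l$ points in general position I would reduce to the collinear case by an automorphism. A generic linear change makes the first coordinates of the points distinct; letting $\psi$ be the degree-$(l-1)$ interpolation polynomial through them, the shear $\sigma(z_1,z_2)=(z_1,\,z_2-\psi(z_1))$ carries $F$ onto the $x$-axis. If $g$ is the collinear morphism with $\mathrm{im}(g)=\at\setminus\sigma(F)$, then $f:=\sigma^{-1}\circ g$ is a morphism with $\mathrm{im}(f)=\sigma^{-1}(\at\setminus\sigma(F))=\at\setminus F$, and since $\sigma^{-1}(z_1,z_2)=(z_1,z_2+\psi(z_1))$ has degree $l-1$, we get $\deg f\leq\deg\sigma^{-1}\cdot\deg g$, which accounts for the factor $(l-1)$ in the bound $(l-1)(l+2)$.

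Finally, to pass from $\at$ to $\an$ I would again first move $F$ onto the $z_1$-axis by a shear in $z_2,\dots,z_n$ over $z_1$, reducing to $F'=\{(c_i,0,\dots,0)\}$, and then take $h=(z_1z_2,\,B,\,z_3,\dots,z_n)$ where $B$ is the collinear second coordinate corrected by extra terms such as $\sum_{j\geq 3}z_2^{\,j-2}z_j$. On the bad fibre $z_1z_2=c_i$ these terms contribute the positive powers $\sum_{j\geq 3}z_j\,t^{\,j-2}$, whose coefficients vanish simultaneously exactly when $(z_3,\dots,z_n)=0$; hence the Laurent polynomial stays genuine (and $B$ stays surjective on that fibre) for every off-axis point, while it collapses to $-c_i/t$ and omits a value only along the $z_1$-axis. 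This localises the missing set to the $l$ points $F'$. The \textbf{main obstacles} are twofold: achieving the sharp uniform degree $l+2$, which forces the more economical fibration together with a delicate analysis of its degenerate fibres, and verifying in the higher-dimensional step that the correction terms repair \emph{all} off-axis points without re-covering any of the $l$ target points, i.e.\ that the omitted set is exactly $F'$ and not some larger subvariety.
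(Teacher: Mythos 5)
Your two-dimensional analysis is essentially correct as far as it goes: the fibrewise Laurent-polynomial argument does show that $g=(z_1R(z_1z_2)-z_2,\,z_1z_2)$ with $R(w)=\prod_i(w-c_i)$ has image exactly $\at\setminus\{(0,c_i)\}_i$, and conjugating by a shear handles general position. But there are two genuine gaps, and together they cover most of what the proposition actually asserts. The first is the degree: your collinear map has degree $2l+1$ (the term $z_1R(z_1z_2)$ has degree $1+2l$), and you explicitly defer the construction achieving $l+2$ as ``degree bookkeeping.'' Since the factor $(l+2)$ in the general bound comes from that same collinear map, neither claimed bound --- $\deg f=l+2$ for collinear $F$, nor $\deg f\leq(l-1)(l+2)$ for $l>2$ --- is established; your route gives only $(l-1)(2l+1)$. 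The paper gets $l+2$ immediately from a different modification of Jelonek's map, $\sigma_{2,r}=\bigl(z_1+z_2(r(z_1)z_2+1),\,r(z_1)z_2+1\bigr)$ with $r(z_1)=\prod_j\bigl(z_1-p_j^{(1)}\bigr)$: solving for a preimage of $w$ reduces to $0=1-w_2+z_2\,r(w_1-w_2z_2)$, unsolvable exactly at the $l$ points $(p_j^{(1)},0)$. The decisive point is that $r$ depends on $z_1$ alone rather than on the degree-two quantity $z_1z_2$, which is precisely what keeps the degree at $l+2$ instead of $2l+1$.

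The second gap is the higher-dimensional step, which fails as written. With last coordinates $z_3,\ldots,z_n$ retained and correction $\sum_{j\geq3}z_2^{\,j-2}z_j$, the $j=3$ term merges with $-z_2$ into $z_2(z_3-1)$, so on the bad fibre $z_1z_2=c_i$ the restriction is $(z_3-1)t+z_4t^2+\cdots+z_nt^{n-2}$, whose coefficients vanish simultaneously at $(z_3,\ldots,z_n)=(1,0,\ldots,0)$, not only at the origin: every target point $(c_i,v,1,0,\ldots,0)$ with $v\neq0$ then has $B\equiv0$ on its (forced) fibre and is missed, so the omitted set contains punctured lines rather than $l$ points. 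You flagged exactly this verification as your main obstacle, and indeed your candidate does not pass it (it is reparable --- start the corrections at $z_2^2z_3$ --- but that is the missing idea, not a detail). The paper avoids the issue by a structurally different device: it keeps $z_3,\ldots,z_n$ out of the first coordinate and instead uses the triangular tower $\sigma_{n,r}=\bigl(z_1+z_2(r(z_1)z_2+1),\,r(z_1)z_2+1+z_3,\,z_3^2+z_4,\ldots,z_{n-1}^2+z_n,\,z_n^2\bigr)$, where the obstruction equation $0=1-(w_2-z_3)+z_2\,r\bigl(w_1-(w_2-z_3)z_2\bigr)$ is unsolvable only if $w_1\in\{p_j^{(1)}\}_j$ and $z_3=w_2$; since $z_3=\pm\sqrt{w_3\mp\sqrt{w_4\pm\sqrt{\ldots}}}$ admits several values unless $w_3=\cdots=w_n=0$, the missed set is exactly $\{p_j^{(1)}\}_j\times\{0\}^{n-1}$, the degree stays $l+2$, and composing with the interpolation automorphism (your shear, of degree at most $l-1$, affine when $F$ is collinear) yields both stated bounds.
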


In particular, if $l < 3$, then there is a map $f$ with $\deg (f) = l+2$. It is worth noting that for $Y$ open in $\at$, $Y$ is an $A$-image precisely if $\at \setminus Y$ has codimension two. Hence any two-dimensional $A$-image $Y\subset \at$ admits a surjective morphism from $\at$. 

Finally, recall that the Noether normalization lemma \cite[Theorem 13.3]{eisenbud2013commutative} states that for any $n$-dimensional affine variety $X$, there exists a finite, hence surjective, morphism $X \to \an$. As a composition of surjections is surjective, we have the following proposition.

\begin{prop}\label{prop: Noether}
    For any $n$-dimensional affine variety $X$, and any $Y$ admitting a surjective map $f: \an \to Y$, there exists a surjective map
    $$\phi: X \to \an \to Y.$$
\end{prop}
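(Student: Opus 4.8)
The plan is simply to exhibit $\phi$ as the composite of the two surjections that are already at hand. By the Noether normalization lemma, recalled immediately before the statement, there is a finite---and hence surjective---morphism $\pi: X \to \an$. By hypothesis the map $f: \an \to Y$ is surjective. I would therefore define $\phi \defeq f \circ \pi: X \to Y$ and observe that
\[
\phi(X) = f(\pi(X)) = f(\an) = Y,
\]
so that $\phi$ is surjective, as required.

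The only step worth spelling out is the surjectivity of $\pi$, which is the content of the ``hence'' in ``finite, hence surjective''. Here $\pi$ arises from a module-finite injection $\K[y_1,\dots,y_n] \hookrightarrow \K[X]$ of coordinate rings, where $y_1,\dots,y_n$ are algebraically independent elements over which $\K[X]$ is integral. Injectivity of this ring map makes $\pi$ dominant, while finiteness makes $\pi$ a closed map (via the going-up theorem); since a dense closed subset of the irreducible variety $\an$ must be all of $\an$, we get $\pi(X) = \an$.

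I do not anticipate any genuine obstacle. Once the finite morphism produced by Noether normalization is known to be surjective, the statement reduces to the elementary fact that a composition of surjective maps of sets is again surjective. All of the substantive content lives in the preceding results giving the surjection $f$ and in the normalization lemma itself; the present proposition is merely the observation that these two surjections can be chained together.
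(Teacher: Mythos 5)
Your proof is correct and matches the paper's argument exactly: the paper likewise obtains $\phi$ by composing the finite (hence surjective) morphism $X \to \an$ from Noether normalization with the given surjection $f: \an \to Y$. Your extra paragraph justifying surjectivity of finite dominant morphisms via going-up is a sound elaboration of the paper's one-line ``finite, hence surjective'' remark.
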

In particular, this shows that from any $n$-dimensional affine variety $X$, there are surjective maps onto varieties $Y$ as in \cref{thm: surjective onto complements of affine variety} and \cref{thm: surjective onto complement of points}. By combining Noether normalization with Kusakabe's result \cite{kusakabe2022surjective}, we conclude that any smooth subelliptic variety $Y$ of dimension $n$ admits a surjective map from any $(n+1)$-dimensional affine variety $X$.

\section*{Acknowledgements}
The author is supported by Young Research Talents grant 300814 from the Research Council of Norway. The author is very grateful to Tuyen Trung Truong for introducing him to the problem and for providing helpful guidance. The author would also like to thank the reviewer, as well as Ivan Arzhantsev, Boulos El Hilany, Franc Forstneri\v{c}, Zbigniew Jelonek, Shulim Kaliman, Yuta Kusakabe, and Mikhail Zaidenberg for helpful comments and clarifications. 

%%%%%%%%%%%%%%%%%%%%%%%%%%%%%%%%%%%%%%%%%%%%%%%%%%%%%%%%%%%%%%%%%
\section{Proofs and explicit constructions}\label{sec: proofs}
%%%%%%%%%%%%%%%%%%%%%%%%%%%%%%%%%%%%%%%%%%%%%%%%%%%%%%%%%%%%%%%%
We first construct a map $g$ in \cref{lem: surjective onto OOW}, which we then use to prove \cref{thm: surjective onto complements of affine variety}. We then prove \cref{thm: surjective onto complement of points}.

\begin{lem}\label{lem: surjective onto OOW}
Let $g: \an_z \to \an_w$ be given by 
\begin{equation}
    g=\big(z_{1}+z_{2}(r(z_{1})z_{2}+1) 
    +\sum_{i=1}^{m} z_2^{i+1}q_i ,\, 
    r(z_{1})z_{2}+1 ,\, 
    z_3 ,\, \ldots ,\, 
    z_n \big), \label{eq: function f}
\end{equation}
where $r(z_1)$ has $l$ simple roots $\{\beta_j\}_{j}$, and the $m$ polynomials $q_i(z_3,z_4,\ldots,z_n)$ have degree at most $d$. Then $g$
has image $\an \setminus Z$, where $Z = \{\beta_j\}_{j}  \times \{0\} \times W$, where 
$W\subset \mathbb{A}^{n-2}$ is the set of common zeros of the $q_i$. 
\end{lem}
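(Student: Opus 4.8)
The plan is to exploit the triangular structure of $g$. Since the last $n-2$ coordinates are transported identically, $g$ commutes with the projection $\an \to \mathbb{A}^{n-2}$ onto those coordinates, so the image is determined slice by slice. First I would fix a target $(w_1,\ldots,w_n)$, observe that a preimage is forced to satisfy $z_j = w_j$ for $j\geq 3$, and abbreviate $c_i := q_i(w_3,\ldots,w_n)$; with this data the $w$-point lies in $W$ in its last coordinates exactly when all $c_i = 0$. Deciding membership in the image then reduces to solving the two-variable system
\[
  w_2 = r(z_1)z_2 + 1 \quad \text{and} \quad w_1 = z_1 + z_2\big(r(z_1)z_2 + 1\big) + \sum_{i=1}^m c_i z_2^{i+1}.
\]
Using the first relation to rewrite the factor $r(z_1)z_2+1$ as $w_2$ simplifies the second equation to $w_1 = z_1 + w_2 z_2 + \sum_{i=1}^m c_i z_2^{i+1}$, after which the excluded set will depend only on $w_1$, $w_2$, and the vanishing of the $c_i$ — precisely the data cutting out $Z$.

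Next I would split on the value of $w_2$. If $w_2 = 1$, then $z_1 = w_1,\ z_2 = 0$ solves both equations, so every such point is hit. If $w_2 \neq 1$, then $r(z_1)z_2 = w_2 - 1 \neq 0$ forces both $z_2 \neq 0$ and $r(z_1)\neq 0$; solving the simplified second equation for $z_1 = A(z_2)$ with $A(z_2) := w_1 - w_2 z_2 - \sum_{i=1}^m c_i z_2^{i+1}$ and substituting back into the first collapses the system to the single-variable equation
\[
  Q(z_2) := r\big(A(z_2)\big)\, z_2 - (w_2 - 1) = 0.
\]
The key observation is that $Q(0) = 1 - w_2 \neq 0$, so every root of $Q$ is automatically nonzero and produces a genuine preimage (with $r(z_1)=(w_2-1)/z_2\neq 0$ as required). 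Hence the point lies in the image as soon as $Q$ is a non-constant polynomial in $z_2$, which over the algebraically closed field $\K$ guarantees a root.

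The \emph{crux}, and the step I expect to be the main obstacle, is pinning down exactly when $Q$ degenerates to a nonzero constant, since those are the only candidates for omitted points. As a polynomial in $z_2$, the term $r(A(z_2))z_2$ is divisible by $z_2$, so it is constant only if it vanishes identically, i.e. only if $r(A(z_2)) \equiv 0$. Because $A$ is non-constant precisely when $w_2 \neq 0$ or some $c_i \neq 0$, and because a non-constant $A$ composed with the non-constant polynomial $r$ stays non-constant, the polynomial $r(A(z_2))$ can vanish identically only when $A \equiv w_1$, that is when $w_2 = 0$ and all $c_i = 0$; in that situation $Q \equiv r(w_1) z_2 + 1$, which is constant exactly when $r(w_1)=0$, i.e. when $w_1 \in \{\beta_j\}_j$, and then $Q \equiv 1 \neq 0$.

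Collecting the cases, a point fails to lie in the image if and only if $w_2 = 0$, all $c_i = 0$, and $w_1 \in \{\beta_j\}_j$ — equivalently $w_1 \in \{\beta_j\}_j$, $w_2 = 0$, and $(w_3,\ldots,w_n)\in W$ — which is exactly $Z$. I would finish by checking that these omitted points cannot be recovered through the $w_2 = 1$ branch, since they have $w_2 = 0 \neq 1$, so the image of $g$ is precisely $\an \setminus Z$.
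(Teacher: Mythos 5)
Your proof is correct and follows essentially the same route as the paper: fix $w$, note $z_j = w_j$ for $j \geq 3$, eliminate $z_1$ via $z_1 = w_1 - w_2 z_2 - \sum_i c_i z_2^{i+1}$, and reduce to the single polynomial equation $r(A(z_2))z_2 + 1 - w_2 = 0$ in $z_2$. Your only addition is to make explicit the degeneration analysis (when $Q$ is constant) that the paper compresses into ``one may solve \eqref{eq: general secondeq} for $z_2$,'' which is a welcome clarification rather than a different argument.
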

\begin{proof}
Fix $w\in \an_w$, and notice that any point $z$ in the preimage $g\inv(w)$ must satisfy $z_3=w_3,\ldots, z_n=w_n$. 
Fixing the values of $z_3, \ldots, z_n$  will in turn determine the values $q_i(z_3,\ldots, z_n) = \alpha_i$.
Thus whether $g\inv(w)$ is empty only depends on the values of $w_1,w_2,\alpha_1,\ldots,\alpha_{m}$, and it reduces to solving 
the following system of equations for $z_{ 1 }$ and $z_{ 2 }$.
\begin{align}
    w_1 &= z_{ 1 } + z_{ 2 }(r(z_{ 1 })z_{ 2 }+1) +\sum_{i=1}^{m} z_{ 2 }^{i+1}\alpha_i \label{eq: w1 equals stuff} \\
    w_2 &= r(z_{ 1 })z_{ 2 }+1. \label{eq: w2 equals stuff}
\end{align}
Solve \cref{eq: w1 equals stuff} for $z_{ 1 }$, then substitute for $z_1$ in \cref{eq: w2 equals stuff} and simplify to obtain
\begin{align}
    z_{ 1 } &= w_1 -z_{ 2 }w_2 -\sum_{i=1}^{m} \alpha_i z_{ 2 }^{i+1} \label{eq: general firsteq}  \\
    0 &= 1-w_2 +z_{ 2 } \cdot r\left(w_1 -w_2z_{ 2 } - \sum_{i=1}^{m} \alpha_i z_{ 2 }^{i+1}\right). \label{eq: general secondeq}
\end{align}
If $w\in \left(\{\beta_j\}_j \times \{0\}\times W\right)$, then $(w_1,w_2,\alpha_1, \ldots, \alpha_{m}) = (\beta_j,0,\ldots,0)$ for some $j$, and \cref{eq: general secondeq} reduces to $0 = 1$, which is clearly unsolvable.

If $w\notin \left(\{\beta_j\}_j \times \{0\}\times W\right)$, then $(w_1,w_2,\alpha_1, \ldots, \alpha_{m}) \neq (\beta_j,0,\ldots,0)$ for any $j$, and one may solve \cref{eq: general secondeq} for $z_{ 2 }$, and then solve \cref{eq: general firsteq} for $z_{ 1 }$. Hence, $g(\an)= \an\setminus Z$.
\end{proof}

We may now prove the main theorem.
%%%%%%%%%%%%%%%%%%%%%%%%%%%%%%%%%%%%%%%%%%%%%%%%%%%%%%%%%%%%%%%%%%%
\begin{proof}[Proof of \cref{thm: surjective onto complements of affine variety}]
%\{p_j\}_{j}
Given $Z= F \times W$ as in the theorem statement, we modify the function $g$ in \cref{eq: function f} by composing with suitable automorphisms. 
For any $F\subset \at$, first pick a convenient coordinate system by applying an affine $\at$-automorphism $t$ such that $t(F) = \{p_j\}_j$, where the points $p_j$ have pairwise distinct first coordinates $\beta_j = t(p_j)^{(1)}$. To simplify the exposition, we assume $F= \{p_j\}_j$.
Define 
\begin{equation}
r(z_1)= \prod_{j=1}^m \left(z_1- \beta_j\right),
\end{equation}
and use \cref{lem: surjective onto OOW} to construct a surjective map $g: \an \to \an \setminus \left( \{\beta_j\}_{j}  \times \{0\} \times W \right)$. 
Let $L(z_1)$ be a Lagrange polynomial such that $L(\beta_j) = p_j^{(2)}$ for all $j$, and $h=(z_1, z_2+L(z_1), z_3, \ldots, z_n)$, and set $f= h \circ g$. Then $f\left(\an\right) = \an \setminus Z$. 
The degree of $h$ is at most $\max(1,l-1)$, and $g$ has degree at most $\max(l+2, m+d+1)$. Hence the degree of $f$ is bounded by $\max(1, l-1)\cdot \max(l+2, m+d+1)$.
\end{proof}
%%%%%%%%%%%%%%%%%%%%%%%%%%%%%%%%%%%%%%%%%%%%%%%%%%%%%%%%%%%%%%%%%%

\begin{exam}
    Let $Z\subset \A^4$ be two parallel copies (contained in $\{(1,0)\}\times \at$ and $\{(-1,0)\}\times \at$ respectively) of a nodal cubic curve $W=V(w_3^2-w_4^3-w_4^2)\subset \at$. Then 
    \begin{equation}
        f=\big(z_1+ z_2((z_1^2-1)z_2+1)+z_2^2(z_3^2-z_4^3-z_4^2),\, (z_1^2-1)z_2+1 ,\, z_3 ,\, z_4 \big)
    \end{equation}
    is a surjective map $f:\A^4 \to \A^4\setminus Z$.
\end{exam} 

\begin{exam} \label{ex:many points}
    Let $d\geq n>2$ be natural numbers, and for $i = 1,\ldots, n-2$, let $q_i(z_{i+2})= \prod_{j=1}^{j=d-i-1} (z_{i+2}-j)$. Similarly, let $r(z_1)= \prod_{j=1}^{j=d-2} (z_{1}-j)$. Then $f: \an \to \an$ given by 
    \begin{equation}
        f=\big(z_{ 1 } + z_{ 2 }(r(z_{ 1 })z_{ 2 }+1) 
    +\sum_{i=1}^{n-2} z_{ 2 }^{i+1}q_i ,\, 
    r(z_{ 1 })z_{ 2 }+1 ,\, 
    z_3 ,\, \ldots ,\, 
    z_n \big),
    \end{equation}
    has degree $d$, and the complement of the image of $f$ is the set $\{1,2,\ldots, d-2\}\times\{0\}\times\{1,\ldots, d-2\}\times \{1,\ldots, d-3\} \times \ldots \times \{1,2,\ldots, d-n+1\}$, consisting of $(d-2)\cdot (d-2)!/(d-n)!$ points.
\end{exam}
The above example gives a function with superexponentially many points in the complement of the image as $n$ and $d$ grow. We now construct $f$ in \cref{thm: surjective onto complement of points}, which only avoids $\deg(f)-2$ points. However, for any $n$, the proposition gives a surjective map $f:\an \to \an \setminus \{0\}$ of degree only $\deg(f)=3$.

\iffalse
\begin{exam}[Z may even be singular!]
    Let $E=V(w_1,w_2,w_3^2-w_4^3-w_4^2)$ be a singular cubic curve in $\A^4$. The function 
    \begin{equation}
        f=\big(z_1+ z_2(z_1z_2+1)+z_1^2(z_3^2-z_4^3-z_4^2),\, z_1z_2+1 ,\, z_3 ,\, z_4 \big)
    \end{equation}
    is surjective onto $\A^4\setminus E$. 
\end{exam}
\fi
\begin{proof}[Proof of \cref{thm: surjective onto complement of points}]
Given $l$ points $F= \{p_j\}_j \subset \an$, pick a coordinate system in which $p_j^{(1)}$ are all pairwise distinct, as in the proof of \cref{thm: surjective onto complements of affine variety}. Define the polynomial 
%\begin{equation}
$r(z_1)= \prod_{j=1}^l \left(z_1- p_j^{(1)}\right)$.
%\end{equation}
Then define $\sigma_{2,r}:\at_z \to \at_w$ to be
\begin{equation}
    \sigma_{2,r}=\big(z_{1} +z_{2}(r(z_{1})z_{2}+1) ,\, 
    r(z_{1})z_{2}+1\big).
\end{equation}
Note that $\sigma_{2,r}$ is equal to $g$ defined in \cref{eq: function f}, and hence $\deg (\sigma_{2,r})=l+2$. 
For $n>2$, define $\sigma_{n,r}:\an_z \to \an_w$ to be
\begin{equation}
    \sigma_{n,r}=\big(z_{1} +z_{2}(r(z_{1})z_{2}+1) ,\, 
    r(z_{1})z_{2}+1+z_3,\, z_3^2+z_4,\, \ldots,\, z_{n-1}^2+z_n,\, z_n^2\big).
\end{equation} 
Let $n>2$ and fix $w\in \an_w$. The same calculation as in the proof of \cref{lem: surjective onto OOW} leads to an equation similar to \cref{eq: general secondeq}, namely
\begin{align}
    0 &= 1-(w_2-z_3) +z_{ 2 } \cdot r\left(w_1 -(w_2-z_3)z_{ 2 }\right). \label{eq: general secondeq for points}
\end{align}
This is unsolvable for $z_2$ if and only if $w_1 \in \{p_j^{(1)}\}_j$ and $w_2=z_3$. However, $z_3 = \pm \sqrt{w_3 \mp \sqrt{w_4 \pm \sqrt{\ldots}}}$, which has multiple solutions unless $w_3=0,\ldots,w_n=0$. 
Hence $\sigma_{n,r}(\an) = \an \setminus \left(\{p_1^{(1)},p_2^{(1)},\ldots,p_l^{(1)}\}\times \{0\}\times \ldots \times \{0\}\right)$. 
Similarly to the proof of \cref{thm: surjective onto complements of affine variety}, we may construct an $\an$-automorphism $h$ by use of interpolation polynomials $L_i(z_1)$ such that $L_i\left(p_j^{(1)}\right) = p_j^{(i)}$ for all $j$. By composing, we obtain $f=h\circ\sigma_{n,r}$, which satisfies $f(\an) = \an \setminus F$, and has degree bounded by $\deg(f) \leq (l-1)(l+2).$ If $F$ is a subset of a line, then $h$ can be an affine automorphism, and we obtain $\deg(f)=l+2$.
\end{proof}

\bibliography{mylib}
\end{document}